\documentclass{amsart}
\usepackage{graphicx}
\usepackage{epstopdf}
\usepackage{pstricks}
\usepackage[mathscr]{eucal}

\textwidth=14.5cm \textheight=21.5cm

\hoffset-1.2cm \voffset+0.5cm

\setlength{\unitlength}{1mm}

\theoremstyle{plain}
\newtheorem{prop}{Proposition}[section]
\newtheorem{coro}[prop]{Corollary}

\newtheorem{conj}[prop]{Conjecture}
\newtheorem{lemm}[prop]{Lemma}

\newtheorem{thm}[prop]{Theorem}

\theoremstyle{definition}

\def\mcg#1;#2{\Gamma_{#1,#2}}
\def\fg#1;#2{\Pi_{#1,#2}}
\def\tb#1;#2{\mathscr{K}_{\frac{#1}{#2}}}

\begin{document}

\title[ Polynomials of weaving links \& Whitney rank polynomials of the Lucas lattice ]
{Alexander and Jones   Polynomials of weaving 3-braid links and Whitney rank polynomials of Lucas lattice}

\keywords{Weaving knots, knot  polynomials,  Whitney numbers,  Trapezoidal Conjecture}
\author{Mark E. AlSukaiti}
\address{Department of Mathematical Sciences\\ College of Science\\ United Arab Emirates  University \\ 15551 Al Ain, U.A.E.}
\email{700035655@uaeu.ac.ae}

\author{Nafaa Chbili}
\address{Department of Mathematical Sciences\\ College of Science\\ United Arab Emirates  University \\ 15551 Al Ain, U.A.E.}
\email{nafaachbili@uaeu.ac.ae}
\urladdr{http://faculty.uaeu.ac.ae/nafaachbili}

\date{}

\begin{abstract}
We establish a relationship  between the  Jones  polynomial of generalized weaving knots of type $W(3,n,m)$ and the Chebyshev polynomial of the first kind. Consequently, we  prove
that the coefficients of the Jones polynomial of weaving knots are basically the
Whitney numbers of Lucas lattices.  Furthermore, we give  an explicit formula for the Alexander polynomial of  weaving knots $W(3,n)$  and we prove that  it satisfies Fox's trapezoidal conjecture.
\end{abstract}

\maketitle

\section{Introduction}

Let $f(x)=\sum_{k=0}^{n}\alpha_kx^k$ be a polynomial with real  positive coefficients. Then, $f$ is said to be logarithmically  concave,
or  log-concave for short,  if its coefficients satisfy the condition $\alpha_k^2\geq \alpha_{k-1}\alpha_{k+1}$ for all $0<k<n$. If there exists an integer $r \leq n/2$ such that:
$\alpha_0<\alpha_1< \dots < \alpha_{\frac{n}{2}-r}=\dots =\alpha_{\frac{n}{2}+r}> \dots ...>\alpha_{n-1}>\alpha_{n}$, then $f$ is said to be trapezoidal.  It is well known that every log-concave polynomial is trapezoidal. Polynomials and sequences with such properties have been subject to extensive studies.  The importance of these  polynomials in combinatorics,  algebra and geometry is highlighted in \cite{Hu}, \cite{Sta} and the references therein.\\

 An $n$-component link is an embedding of $n$ disjoint circles $\coprod S^{1}$ into the 3-dimensional sphere  $S^3$. A  knot is a one-component link.  A link diagram is a
regular planar  projection of the link together with hight information at each double point showing which of the two strands passes over the other.
  A link  is said to be alternating if it admits an alternating diagram; a diagram where
 the overpass and the underpass alternate as one travels along any strand of the diagram. \\
Let $n\geq 2$ be an integer  and $B_n$ be  the  group of braids on $n$ strands. This group is generated by the elementary braids
$\sigma_{1}, \sigma_{2}, \dots ,\sigma_{n-1}$ subject to the
following relations:
\begin{align*}
\sigma_{i} \sigma_{j} & =\sigma_{j} \sigma_{i} \mbox{ if } |i-j| \geq 2\\
\sigma_{i} \sigma_{i+1} \sigma_{i} & = \sigma_{i+1}
\sigma_{i}\sigma_{i+1}, \ \forall \ 1 \leq i \leq n-2.
\end{align*}

By Alexander's theorem  \cite{Al1}, any   link $L$ in $S^3$
can be represented  as the closure of an $n$-braid $b$. We write  $L=\hat{ b}$. This representation is not unique and the minimum integer $n$ such that $L$ admits an $n$-braid representation
 is called the braid index of $L$. Recall that two braids represent the same link if and only if they are related by a finite sequence  of  Markov moves \cite{Mu1985}.
The group $B_2$ is infinite cyclic. Consequently, a  link of braid index two is a  $(2,k)$-torus link.  A complete   classification of 3-braids up to conjugacy  can be found in  \cite{Mu1974}. This classification has been  very useful in the study of links of braid index 3. In particular,   Stoimenow  \cite{St} determined which 3-braid links are alternating. Baldwin \cite{Ba} characterized quasi-alternating
 links of braid index 3. More recently, Lee and Vafaee \cite{LV}  proved  that twisted torus knots are the only $L$-space knots of braid index 3. As for knot invariants,   signatures  of  3-braid links are computed  in \cite{Er} and explicit formulas for their determinants are given in \cite{QC}.
 The structure of the Jones polynomials of 3-braid links is studied in \cite{CQ,Ch2022}. Colored HOMFLY-PT invariants of some families of 3-braid links have been computed in \cite{SMR,SC}. A list of open questions related to the study of 3-braids is given by Birman and Menasco  \cite{BM}.\\

The  Alexander polynomial \cite{Al2} is a topological   invariant of oriented links which associates with each link $L$ a Laurent polynomial with  integral coefficients
  $\Delta_L(t) \in \mathbb{Z}[t^{\pm 1/2}]$. Conway proved that this
polynomial can be defined in  a simple recursive way. This polynomial can be also recovered from the Burau representation of the braid group \cite{Bur}.  More recently, Ozsv$\acute{a}$th and  Szab$\acute{o}$ \cite{OS}
 proved that this polynomial, up to the multiplication by some factor,  is   obtained as the Euler characteristic of link Floer homology. The Alexander polynomial of a knot $K$ is known to be  symmetric; it satisfies   $\Delta_K(t)=\Delta_K(t^{-1})$. Furthermore,  up to  the multiplication by a power of $t$  it is always  possible to write  $\Delta_K(t)=\displaystyle\sum_{i=0}^{2n}\alpha_it^i$ with  $\alpha_{0}\neq 0$ and  $\alpha_{2n}\neq 0$.  \\
Murasugi, studied the Alexander polynomial of alternating knots and proved that the coefficients of this polynomial alternate in sign and that the polynomial has no gaps, equivalently $\alpha_i\alpha_{i+1}<0$ for all $0\leq i < 2n$. Moreover, the degree of $\Delta_K(t)$ is equal to twice  the genus of the knot \cite{Mu1958a,Mu1958b}. In \cite{Fo}, R. Fox conjectured that the coefficients of the Alexander polynomial of an alternating knot are  trapezoidal. Stoimenow, later conjectured that these coefficients are indeed log-concave \cite{St2005}.

\begin{conj}\label{conj}\cite{Fo}
Let $K$ be an alternating knot and  $\Delta_K(t)=\pm \displaystyle\sum_{i=0}^{2n}\alpha_i(-t)^i$, with $\alpha_i>0$,   its  Alexander polynomial. Then there exists an integer $r \leq n$ such that:
$$\alpha_0<\alpha_1< \dots < \alpha_{n-r}=\dots =\alpha_{n+r}> \dots ...>\alpha_{2n-1}>\alpha_{2n}.$$
\end{conj}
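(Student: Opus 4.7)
The conjecture as stated is a long-standing open problem for arbitrary alternating knots, so the realistic plan is to establish it for the specific family of weaving knots $W(3,n)$ treated in this paper; these are the closures of the 3-braid $(\sigma_1 \sigma_2^{-1})^n$, which are alternating. By Murasugi's sign theorem, one already knows that the Alexander polynomial can be written as $\Delta_{W(3,n)}(t)=\pm\sum_{i=0}^{2g}\alpha_i(-t)^i$ with all $\alpha_i>0$, so what must be verified is the unimodal (trapezoidal) shape of the sequence $(\alpha_i)$.

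First I would derive an explicit formula for $\Delta_{W(3,n)}(t)$. Since $W(3,n)$ comes from a 3-braid, the reduced Burau representation gives a $2\times 2$ matrix $M(t)$ representing $\sigma_1\sigma_2^{-1}$, and the Alexander polynomial is, up to a standard normalization, $\det(M(t)^n-I)$. Applying Cayley--Hamilton, $M(t)^n$ satisfies a linear recursion in $n$ whose entries are polynomials in $\operatorname{tr} M(t)$. This is precisely the Chebyshev-of-the-first-kind mechanism already exploited for the Jones polynomial in the paper, and it should yield a closed form
\[
\alpha_i \;=\; \sum_{j}(-1)^{?}\binom{n-\ast}{j}\binom{\ast}{i-\ast j},
\]
i.e.\ a binomial-sum expression for each coefficient.

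The next step is to identify these sums with the Whitney rank numbers of the Lucas lattice, which the title and abstract point to as the underlying combinatorial object. Once this identification is made, the trapezoidal property of $(\alpha_i)$ reduces to a known or provable property of the Whitney numbers. For many natural posets (Young, Boolean, fence/fibonomial lattices) these rank numbers are log-concave via either an injective chain argument or Sperner-type reasoning, and log-concavity automatically gives the trapezoidal shape in Conjecture~\ref{conj}. The symmetry $\alpha_i=\alpha_{2g-i}$ needed to locate the plateau around the middle follows from the palindromic symmetry $\Delta_K(t)=\Delta_K(t^{-1})$ of the Alexander polynomial of a knot.

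The main obstacle, I expect, is the middle step: turning the Burau/Chebyshev closed form into the Lucas-lattice rank numbers on the nose, rather than merely up to a provably positive combinatorial rearrangement. The alternating signs coming from $\det(M^n-I)$ must collapse to the non-negative Whitney numbers, which usually requires a careful sign-reversing involution or a Vandermonde-type identity. A secondary difficulty is handling the parity of $n$ uniformly, since $W(3,n)$ is a knot only for $n$ coprime to $3$ and the degree $2g$ of $\Delta_{W(3,n)}$ depends on $n$ in a case-sensitive way; the trapezoidal inequalities will need to be checked near the plateau for both parities, but once the Whitney-number interpretation is in hand this should be straightforward bookkeeping rather than a genuine new difficulty.
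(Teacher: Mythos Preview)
Your setup is right: restrict to $W(3,n)$, compute $\Delta_{W(3,n)}$ via the $2\times 2$ Burau matrix, and recognize the Chebyshev mechanism. The gap is in the next step. You propose to identify the Alexander coefficients $\alpha_{n,k}$ with the Whitney numbers $c_{n,k}$ of the Lucas lattice and then invoke log-concavity of the latter. But in the Burau formula for a $3$-braid one has
\[
\Delta_{\hat b}(t)\;\equiv\;\frac{1-t}{1-t^{3}}\bigl(1-\operatorname{tr}\psi_t(b)+\det\psi_t(b)\bigr),
\]
and it is only the bare trace $\operatorname{tr}\psi_s(b_n)=s^{-n}C_n(s)$ that equals the Lucas rank polynomial. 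That trace gives the \emph{Jones} polynomial (up to $-s-s^{-1}$), not the Alexander polynomial. To get $\Delta_{W(3,n)}$ you must still divide by $1-s+s^{2}$, so each $\alpha_{n,k}$ is an alternating sum $\sum_{i\ge 0}(-1)^{i}c^{*}_{n,k-3i}$ of (shifted) Whitney numbers rather than a single Whitney number. Log-concavity of the $c_{n,k}$, even if you have it, does not survive this alternating $3$-periodic convolution without a separate argument, and your sign-reversing-involution remark does not address this specific obstruction.

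The paper avoids this problem by not trying to match $\alpha_{n,k}$ to Whitney numbers at all. Instead it pushes the Whitney recurrence $c_{n,k}=c_{n-1,k}+c_{n-1,k-1}+c_{n-1,k-2}-c_{n-2,k-2}$ through the division by $1-s+s^{2}$ to obtain the \emph{same} four-term recurrence for $\alpha_{n,k}$ (with a boundary correction of $+2$ at the middle coefficient). Telescoping that recurrence yields the manifestly positive expansion
\[
\alpha_{n,k}\;=\;\alpha_{n-1,k}\;+\;\sum_{i\ge 1}\alpha_{n-i,\,k-2i+1},
\]
from which $\alpha_{n,k}>\alpha_{n,k-1}$ for $k\le n-1$ follows by induction on $n$, giving strict unimodality (so $r=0$). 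No log-concavity of any auxiliary lattice is used. Your worry about $n$ coprime to $3$ is a non-issue here: the argument is uniform in $n$ and yields trapezoidality of the polynomial regardless of the number of components.
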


This conjecture, known as    Fox's trapezoidal conjecture, was listed by J. Huh \cite{Hu}, as one of the most interesting open problems about log-concavity. The conjecture has been confirmed  for several classes of alternating knots.  Hartley  proved the conjecture for 2-bridge knots  \cite{Ha}. Murasugi, showed that  the conjecture holds  for
a large family of  alternating algebraic knots \cite{Mu1985}. Using knot Floer homology,  Ozsv$\acute{a}$th and  Szab$\acute{o}$ \cite{OS}  confirmed  the conjecture for knots of genus 2. The same result has been obtained by Jong using  combinatorial methods \cite{Jo1,Jo2}. Hirasawa and Murasugi \cite{HM} showed  that Conjecture \ref{conj} holds  for stable alternating knots and suggested a refinement of this conjecture by stating that the length of the stable part  is less than the signature of the knot $\sigma(K)$, which means $r\leq |\sigma(K)|/2$.  In  \cite{Ch}, Chen  proved   that this refinement holds for  two-bridge knots. Alrefai and the second author, showed that certain families of 3-braid knots satisfy Fox trapezoidal conjecture in its refined form  \cite{AC}.\\
Let $m \geq 1$ be an integer. The closure of the 3-braid $(\sigma_{1}^{m}\sigma_{2}^{-m})^n$ is called the  generalized weaving link $W(3,n,m)$, see \cite{SMR,SC}. Notice that if $m=1$, then we get the classical weaving link often denoted as $W(3,n)$.  In this paper, we  establish a relationship between the trace of the Burau representation of the braids of type
 $(\sigma_{1}^{m}\sigma_{2}^{-m})^n$ and the Chebyshev polynomials of the first  kind. As an application, we introduce explicit formulas for the coefficients of  Jones and Alexander polynomials of weaving links. Furthermore, we verify that the Alexander polynomial of weaving knots satisfy Fox's trapezoidal conjecture,  see Figure \ref{figure0}.
We also compute the zeroes of the Alexander polynomials of weaving knots and we verify that the real  part of any such zero is  greater than $-1$, hence these zeroes   satisfy  Hoste's condition, see \cite{LMu} and \cite{HIS}. By setting $s=-t$, we have:
\begin{thm}\label{main}
\begin{enumerate}
\item The Alexander polynomial of the weaving link $W(3,n)$ is given by the following formula:
 $\Delta_{W(3,n)} (s) = \sum_{k=0}^{2n-2} \alpha_{n,k} s^{k},$ with:
		$$ \alpha_{n,k} = \sum_{i=0}^{n-1} (-1)^{k-n+i+1} \frac{2n(n+i)!}{(n-i-1)! (2i+2)!}  \begin{pmatrix}
			i;3 \\
			k+i-n+1
		\end{pmatrix} $$
		where $\begin{pmatrix}
			n;3 \\ k
		\end{pmatrix}$ is the coefficient of $q^k$ in the expansion of $(1+q+q^2)^n.$

\item For any $n\geq 1$, the coefficients of $\Delta_{W(3,n)}(s)$ are trapezoidal.
\end{enumerate}
\end{thm}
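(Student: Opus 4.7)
My plan is to compute $\Delta_{W(3,n)}(t)$ via the reduced Burau representation $\bar\beta$ applied to the 3-braid $\beta_n = (\sigma_1\sigma_2^{-1})^n$. A direct matrix calculation shows that $M := \bar\beta(\sigma_1\sigma_2^{-1})$ lies in $SL_2$ with trace $\tau = 1 - t - t^{-1}$. Since the eigenvalues of $M$ are then of the form $\lambda,\lambda^{-1}$ with $\lambda + \lambda^{-1} = \tau$, the standard identity $\operatorname{tr}(M^n) = 2T_n(\tau/2)$ holds, where $T_n$ is the $n$-th Chebyshev polynomial of the first kind. This is the promised connection between the Burau trace and Chebyshev polynomials.

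Combining the usual relation $(1 + t + t^2)\Delta_{\hat\beta}(t) = \det(I - \bar\beta(\beta))$ for 3-braid closures with $\det(I - M^n) = 2 - \operatorname{tr}(M^n)$ (valid for $M \in SL_2$), one gets
\[
\Delta_{W(3,n)}(t) = \frac{2 - 2T_n(\tau/2)}{1 + t + t^2}.
\]
Switching to $s = -t$ as in the statement, $\tau$ becomes $1 + s + s^{-1} = s^{-1}(1+s+s^2)$, so each power $\tau^j$ yields a trinomial expansion $s^{-j}\sum_k \binom{j;3}{k} s^k$. The denominator $1 + t + t^2 = 1 - s + s^2$ vanishes at the primitive sixth roots of unity, where $\tau = 2$ and hence $T_n(\tau/2) = T_n(1) = 1$, so it does divide the numerator. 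Plugging the classical polynomial expansion of $T_n(x)$ into $2 - 2T_n(\tau/2)$, performing the division by $1-s+s^2$, and collecting the resulting double sum by powers of $s$ yields the explicit formula for $\alpha_{n,k}$ in part (1). This is a lengthy but essentially mechanical bookkeeping computation.

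For part (2), the trapezoidal property is not transparent from the closed form, because of the alternating sign $(-1)^{k-n+i+1}$ in the summand. The palindromic symmetry $\alpha_{n,k} = \alpha_{n,2n-2-k}$ is automatic from $\Delta_K(t) = \Delta_K(t^{-1})$, so it suffices to prove $\alpha_{n,k+1} \geq \alpha_{n,k}$ for $0 \leq k \leq n - 2$. The cleanest route is to use the Chebyshev recurrence $T_n = 2xT_{n-1} - T_{n-2}$ to obtain
\[
\Delta_{W(3,n)}(s) = (1 + s + s^{-1})\,\Delta_{W(3,n-1)}(s) - \Delta_{W(3,n-2)}(s),
\]
and then to argue trapezoidality by induction on $n$, with $W(3,1)$ and $W(3,2)$ serving as base cases.

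The principal obstacle lies in controlling the subtraction step of this induction. Convolution with $1 + s + s^{-1}$ preserves symmetry and weakly preserves unimodality of a positive sequence, but subtracting $\Delta_{W(3,n-2)}(s)$ could a priori create a dip in the middle. Resolving this will require a quantitative comparison of consecutive coefficients of $(1 + s + s^{-1})\Delta_{W(3,n-1)}(s)$ against those of $\Delta_{W(3,n-2)}(s)$, ultimately resting on the classical log-concavity of the trinomial coefficients $\binom{n;3}{k}$ that sit inside the explicit formula derived in part (1). Establishing this inequality with enough sharpness to be preserved under the recurrence is where the real work of the theorem will concentrate.
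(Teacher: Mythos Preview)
Your setup for part~(1) matches the paper's: Burau trace $\to$ Chebyshev, hence $\Delta_{W(3,n)}(s)\equiv\dfrac{2-2T_n\bigl(\tfrac{1+s+s^2}{2s}\bigr)}{1-s+s^2}$. The step you call ``mechanical bookkeeping'' is not entirely so: the paper uses the specific expansion $T_n(x)=n\sum_{k}(-2)^k\tfrac{(n+k-1)!}{(n-k)!(2k)!}(1-x)^k$, and since $1-x=-\tfrac{1-s+s^2}{2s}$ here, one factor of $1-s+s^2$ peels off \emph{before} any trinomial expansion is done. That makes the division trivial and the stated closed form for $\alpha_{n,k}$ fall out immediately. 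With a generic power-series form of $T_n$ the division is a genuine computation and the answer is much harder to recognise.

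For part~(2) your three-term recurrence is almost right but not exact. Because $\Delta$ comes from $2-2T_n$ rather than from $T_n$ itself, the Chebyshev recursion acquires an inhomogeneous term; in coefficient form one gets
\[
\alpha_{n,k}=\alpha_{n-1,k}+\alpha_{n-1,k-1}+\alpha_{n-1,k-2}-\alpha_{n-2,k-2}\qquad(1<k<n-1),
\]
with an extra $+2$ when $k=n-1$. (The paper reaches the same recurrence by a longer route through the Whitney numbers $c_{n,k}$ of the Lucas lattice; your Chebyshev derivation is cleaner.) The $+2$ only helps, so its omission is harmless.

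The real gap is how you propose to absorb the subtraction $-\alpha_{n-2,k-2}$. You correctly flag it as the obstacle, but the suggested fix---log-concavity of the trinomial numbers $\binom{i;3}{j}$---does not connect: the $\alpha_{n,k}$ are \emph{alternating} weighted sums of trinomials, and log-concavity of the inner ingredients says nothing direct about such sums. The paper's device, which your proposal is missing, is to \emph{telescope the negative term away}. Feeding the same recurrence back into $\alpha_{n-1,k-2}$ cancels $-\alpha_{n-2,k-2}$ and produces $-\alpha_{n-3,k-4}$; iterating yields
\[
\alpha_{n,k}=\alpha_{n-1,k}+\sum_{i\ge 1}\alpha_{n-i,\,k-2i+1}\qquad(+2\ \text{if }k=n-1),
\]
a sum of \emph{only positive} terms drawn from \emph{all} earlier Alexander polynomials. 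Strict trapezoidality then follows by a clean strong induction on $n$: each summand $\alpha_{n-i,k-2i+1}$ dominates $\alpha_{n-i,k-2i}$ by the inductive hypothesis, whence $\alpha_{n,k}>\alpha_{n,k-1}$ for $k\le n-1$. Once the minus sign is gone, no ``quantitative sharpness'' estimate is needed at all.
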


\begin{figure}[h]
$$\begin{array}{ccccccccccccccccc}
\bf 1&&&&&&&&&&&&&&&&\\
1&\bf 3&1&&&&&&&&&&&&&&\\
1&4&\bf 6&4&1&&&&&&&&&&&&\\
1& 5& 10& \bf13& 10& 5& 1&&&&&&&&&&\\
1& 6& 15& 24& \bf 29& 24& 15& 6& 1&&&&&&&&\\
1 &7& 21& 40& 58&\bf  66& 58& 40& 21& 7& 1&&&&&&\\
1 &8 &28& 62& 104 &140 &\bf155 &140 &104& 62& 28& 8& 1&&&&\\
1& 9& 36& 91& 173& 266& 341&\bf 371& 341& 266& 173& 91& 36& 9& 1&&\\
1 &10& 45& 128& 272& 468& 676& 838& \bf 900& 838& 676& 468& 272& 128& 45& 10& 1
\end{array}
$$
\caption{The coefficients of the Alexander polynomials of $\Delta_{W(3,n)} (s)$, for $ n=1$ to 10. }
\label{figure0}
\end{figure}

Here is an outline of this paper. In Section 2, we briefly recall some basic definitions and notations needed in  the sequel. In Section 3, we study the Jones polynomial of generalized weaving links. Section 4 is devoted to prove our main theorem. Finally, the zeroes of the Alexander polynomial of weaving knots are computed in Section 5.

\parindent 0cm
	\section{Preliminaries}

In 1928, Alexander \cite{Al2} introduced a  topological invariant of oriented links. This invariant  is a one-variable Laurent polynomial with integral coefficients  $\Delta_L(t)$.  It can be defined in several different, but equivalent, ways. In particular, it  is uniquely determined   recursively   using Conway skein relations:
$$\begin{array}{l}
\Delta_U(t)=1,\\
\Delta_{L_+}(t) - \Delta_{L_-}(t)= (\sqrt{t}-\displaystyle\frac{1}{\sqrt{t}}) \Delta_{L_0}(t),
\end{array}$$
where $U$ is the unknot and  $L_{+}$, $L_{-}$ and  $L_{0}$ are three oriented link diagrams
which are identical except in a small region  where they are as pictured in Figure \ref{figure1}.
\begin{figure}[h]
\centering
\includegraphics[width=10cm,height=2.5cm]{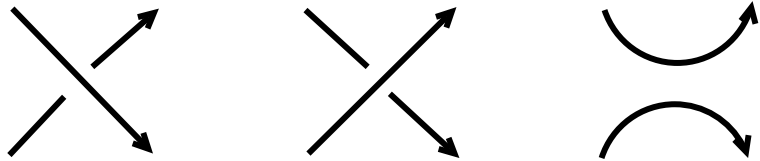}
\caption{ The three oriented links  $L_+$, $L_{-}$ and  $L_0$,  respectively.}
\label{figure1}
\end{figure}

 This polynomial is known to be symmetric, in the sense that it satisfies  $\Delta_L(t)= \Delta_L(t^{-1})$. The reduced Burau representation of the braid group permits  another way to define the Alexander polynomial. Since we are only interested in 3-braid links, we shall restrict ourselves to the Burau representation of    $B_3$. This representation is faithful and can be described  as follows. Let   $b$ be a given 3-braid and  $e_b$  the exponent sum of $b$ as a word in the
elementary braids $\sigma_1$ and $\sigma_2$.   Let $\psi_t: B_{3}
\longrightarrow GL(2, \mathbb{Z}[t,t^{-1}])$ be the Burau
representation defined on the generators of $B_3$ by $\psi_t(\sigma_1)=\left [\begin{array}{cc}
-t&1\\
0&1
\end{array} \right ] $ and $\psi_t(\sigma_2)=\left [\begin{array}{cc}
1&0\\
t&-t
\end{array} \right ] $. The Alexander polynomial, up to the multiplication by a unit of  $\mathbb{Z}[t,t^{-1}]$,  of the link $L=\hat b$ is given by:
$$\Delta_L(t)=(-1/\sqrt{t})^{e_b-2}\frac{1-t}{1-t^3}\mbox{det}(I_2-\psi_t(b)),$$
where $I_2$ is the identity matrix.
 Given that this  representation is 2-dimensional, then $\mbox{det}(I_2-\psi_t(b))=1-\mbox{tr}(\psi_t(b))+\mbox{det}(\psi_t(b))$, where  $\mbox{tr}$ denotes the usual matrix-trace function defined as the sum of  entries on the main diagonal of  the matrix.\\

In 1984, V. F. Jones introduced a new polynomial invariant of oriented links. This one-variable Laurent polynomial, denoted hereafter by $V_L(t)$, was originally defined  through the study of  representations of the braid group via  certain  von Neumann algebras \cite{Jo}. The Jones polynomial  is uniquely determined by the following skein  relations:\\
 $$\begin{array}{l}
V_U(t)=1,\\
tV_{L_+}(t) - t^{-1} V_{L_-}(t)= (\sqrt{t}+\displaystyle\frac{1}{\sqrt{t}}) V_{L_0}(t).
\end{array}$$

In the case of 3-braid links, Birman \cite{Bi} showed  that the Jones polynomial   can be defined using the   Burau representation of $B_3$ by the following formula:
 $$
 V_{\hat\alpha}(t)=(-\sqrt{t})^{e_\alpha}(t+t^{-1}+\mbox{tr}(\psi_t(\alpha))).$$

Throughout the rest of this paper, we  set $s=-t$. Moreover,  for any positive integer $n$ we set $[n]=\frac{1-s^n}{1-s}$. Let $T_n(x)$  be the Chebyshev polynomial of the first kind.  This  polynomial is determined recursiveley by the following  relations:
		\begin{align*}\label{chebRec}
			T_0(x) &= 1\\
			T_1(x) &= x\\
			T_{n+1}(x) &= 2xT_n(x) - T_{n-1}(x).
		\end{align*}
Let $F_n(q)=\sum_{k=0}^{n} f_{n,k} q^k $ and   $C_n(q)= \sum_{k=0}^{2n} c_{n,k} q^k $ be the rank polynomials of the Fibonacci lattice and the Lucas lattice, respectively.
Explicit formulas for the Whitney numbers $f_{n,k}$ and  $c_{n,k}$ have been obtained
 by  Munarini and  Zagaglia Salvi \cite{MZ}. Another formula  for $c_{n,k}$ is given in  \cite{OEIS}:\\
$$c_{n,k} = \begin{cases}
		\sum_{j=0}^{\lfloor k/2 \rfloor} \frac{n}{n-j}\binom{n-j}{n-k+j}\binom{k-j-1}{j} & k < 2n \\
		1 & k = 2n.
	\end{cases}$$
The relation between the  rank polynomial of Lucas lattice and the Chebyshev polynomial is given by the following proposition \cite{MZ}.
	\begin{prop}\label{Prop_chebyshev}
		For any $n\geq 0$, we have: $C_n(q) = 2q^nT_n(\frac{1+q+q^2}{2q})$.
	\end{prop}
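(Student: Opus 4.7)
The plan is to show that both sides of the claimed identity satisfy the same three-term linear recursion in $n$ with matching initial data. Writing $R_n(q):=2q^n T_n\!\bigl(\tfrac{1+q+q^2}{2q}\bigr)$ for the right-hand side, I will first derive a clean recursion for $R_n(q)$ directly from the Chebyshev recurrence, and then separately verify that $C_n(q)$ obeys the same recursion.

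Setting $x=\tfrac{1+q+q^2}{2q}$ and multiplying the defining relation $T_{n+1}(x)=2xT_n(x)-T_{n-1}(x)$ by $2q^{n+1}$, I obtain
\begin{equation*}
R_{n+1}(q)\;=\;2q\cdot 2x\cdot q^n T_n(x)\;-\;q^2\cdot 2q^{n-1}T_{n-1}(x)\;=\;(1+q+q^2)\,R_n(q)\;-\;q^2\,R_{n-1}(q),
\end{equation*}
since $2qx=1+q+q^2$. A direct computation gives the initial values $R_0(q)=2T_0(x)=2$ and $R_1(q)=2qT_1(x)=1+q+q^2$, which coincide with $C_0(q)=2$ (the degenerate Lucas lattice, reflecting $L_0=2$) and $C_1(q)=1+q+q^2$ (read off from the explicit formula).

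It remains to show that $C_n(q)$ satisfies the same recursion $C_{n+1}(q)=(1+q+q^2)C_n(q)-q^2 C_{n-1}(q)$. Extracting the coefficient of $q^k$, this reduces to the Whitney-number identity
\begin{equation*}
c_{n+1,k}\;=\;c_{n,k}+c_{n,k-1}+c_{n,k-2}-c_{n-1,k-2},
\end{equation*}
which I would verify from the explicit formula for $c_{n,k}$ by shifting the summation index $j$, applying Pascal's rule separately to $\binom{n-j}{n-k+j}$ and $\binom{k-j-1}{j}$, and handling the rational prefactor $\tfrac{n}{n-j}$ by writing $\tfrac{n+1}{n+1-j}=\tfrac{n}{n-j}+\tfrac{1}{n+1-j}+\tfrac{j}{(n-j)(n+1-j)}$ to split the sum; the boundary cases $k\in\{2n,2n+1,2n+2\}$ must be treated by hand because the piecewise clause $c_{n,2n}=1$ falls outside the $j$-sum.

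The delicate point is this coefficient identity: the rational prefactor makes the direct binomial manipulation tedious, and that is where I expect most of the work to go. If that route becomes unwieldy, my fallback is the generating-function approach. Starting from $\sum_{n\geq 0}T_n(x)z^n=\tfrac{1-xz}{1-2xz+z^2}$, substituting $z\mapsto qz$ and multiplying by $2$ yields
\begin{equation*}
\sum_{n\geq 0}R_n(q)z^n\;=\;\frac{2-(1+q+q^2)z}{1-(1+q+q^2)z+q^2z^2}.
\end{equation*}
It then suffices to establish the same closed form for $\sum_{n\geq 0}C_n(q)z^n$ from the explicit coefficient formula via a Cauchy-product argument (or to invoke the generating function already recorded in \cite{MZ}). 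Equality of the two generating functions, together with the fact that both expand into polynomials in $q$ of degree $\leq 2n$ at order $z^n$, finishes the proof.
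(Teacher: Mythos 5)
Your argument is sound, but note that the paper itself does not prove this proposition at all: it is quoted from \cite{MZ}, so any comparison is with that reference rather than with an in-paper argument. Your route (match the three-term recursion and initial data) is a legitimate self-contained alternative, and the Chebyshev half is complete and correct: multiplying $T_{n+1}(x)=2xT_n(x)-T_{n-1}(x)$ by $2q^{n+1}$ with $2qx=1+q+q^2$ does give $R_{n+1}=(1+q+q^2)R_n-q^2R_{n-1}$, and the generating-function fallback $\sum_n R_n(q)z^n=\frac{2-(1+q+q^2)z}{1-(1+q+q^2)z+q^2z^2}$ is also correct. The real content you defer, namely $c_{n+1,k}=c_{n,k}+c_{n,k-1}+c_{n,k-2}-c_{n-1,k-2}$, is precisely Proposition \ref{recurrence} of the paper, which is proved there not from the binomial formula but from the identities $c_{n+2,k+2}=f_{2n+4,k+2}+f_{2n,k}$ and the Fibonacci-lattice recurrence taken from \cite{MZ}; since that proof nowhere uses the present proposition, you could simply invoke it (or those two identities) and avoid the rational-prefactor binomial manipulation, which is where your sketch is least convincing as written. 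Two small cautions: the piecewise OEIS formula quoted in the paper gives $C_0(q)=1$, not $2$ (the clause $c_{n,2n}=1$ at $n=0$), so your base case at $n=0$ rests on a convention about the degenerate Lucas lattice rather than on the stated formula; it is cleaner to verify the initial data at $n=1$ and $n=2$ directly ($C_1=1+q+q^2$, $C_2=(1+q+q^2)^2-2q^2$) and run the induction from there, treating $n=0$ as the definition $C_0(q)=2$ consistent with $L_0=2$. With either the citation of Proposition \ref{recurrence} or a completed generating-function computation for $C_n$, your proof closes; what it buys over the paper's bare citation is an explicit verification, at the cost of depending on the same \cite{MZ} combinatorial input one layer down.
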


\section{Burau representation of 3-braids }
	In this section, we shall study   the Burau representation of the braids $b_n=(\sigma_1\sigma_2^{-1})^n$  as well as  $b_{n,m}=(\sigma_1^m\sigma_2^{-m})^n$, where $n$ and $m$ are positive integers. Recall that these  braids close to the weaving link  $W(3,n)$ and the generalized weaving link $W(3,n,m)$, respectively. In particular, we will give an explicit formula for the coefficients of
Jones polynomials of weaving links.
	\begin{prop}{\label{Prop_Trace}}
		The trace of the  Burau representation of the generalized weaving braid  is given by:
		$${ \rm{tr}}(\psi_s(b_{n,m}))= 2^{-n} s^{-nm} (\lambda_{1,m}^n + \lambda_{2,m}^n)$$
		where $$
		\begin{array}{rl}
			\lambda_{1,m} &= (1 +  [m]^2 s + s^{2m}) + \sqrt{(1 + [m]^2 s + s^{2m})^2  - 4s^{2m}}\;\;  {\rm{ and }}\\
			\lambda_{2,m} &= (1 +  [m]^2 s + s^{2m}) -  \sqrt{(1 + [m]^2 s + s^{2m})^2  - 4s^{2m}}.
		\end{array}  $$
	\end{prop}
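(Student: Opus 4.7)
\medskip

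\noindent\textbf{Proof plan.} The braid $b_{n,m}=(\sigma_1^m\sigma_2^{-m})^n$ is the $n$-th power of $c_m:=\sigma_1^m\sigma_2^{-m}$, so its image $M:=\psi_s(c_m)$ is a $2\times 2$ matrix and I only have to control the eigenvalues of $M$. My plan is: (i) compute $M$ in closed form; (ii) read off $\operatorname{tr} M$ and $\det M$; (iii) diagonalise $M$ and rescale the eigenvalues so that they coincide with $\lambda_{1,m},\lambda_{2,m}$; (iv) take the $n$-th power.

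\smallskip

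\noindent\textbf{Step 1: closed forms for $\psi_s(\sigma_1^{\pm m})$ and $\psi_s(\sigma_2^{\pm m})$.} With $t=-s$, the generators become
\[
\psi_s(\sigma_1)=\begin{pmatrix} s & 1 \\ 0 & 1\end{pmatrix},\qquad
\psi_s(\sigma_2)=\begin{pmatrix} 1 & 0 \\ -s & s\end{pmatrix}.
\]
Each has eigenvalues $s$ and $1$, so I diagonalise both by inspection. A short computation (either by induction on $m$ or via conjugation to the diagonal form) gives
\[
\psi_s(\sigma_1^{m})=\begin{pmatrix} s^{m} & [m] \\ 0 & 1\end{pmatrix},\qquad
\psi_s(\sigma_2^{-m})=\begin{pmatrix} 1 & 0 \\ s^{1-m}[m] & s^{-m}\end{pmatrix},
\]
where $[m]=\tfrac{1-s^{m}}{1-s}$ as fixed in Section 2. (The second identity comes from inverting and taking the $m$-th power; the factor $s^{1-m}[m]=(s-s^{1-m})/(s-1)$ telescopes the powers of $s$.)

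\smallskip

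\noindent\textbf{Step 2: the matrix $M$, its trace and determinant.} Multiplying the two matrices above yields
\[
M=\psi_s(c_m)=\begin{pmatrix} s^{m}+s^{1-m}[m]^{2} & s^{-m}[m] \\ s^{1-m}[m] & s^{-m}\end{pmatrix}.
\]
A direct expansion gives $\det M = 1$ (the cross terms $s^{1-2m}[m]^{2}$ cancel) and
\[
\operatorname{tr} M = s^{-m}\bigl(1+[m]^{2}s+s^{2m}\bigr).
\]

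\smallskip

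\noindent\textbf{Step 3: rescaling the eigenvalues.} Let $\mu_1,\mu_2$ be the eigenvalues of $M$, roots of $\mu^{2}-\operatorname{tr}(M)\,\mu+1=0$, equivalently
\[
s^{m}\mu^{2}-\bigl(1+[m]^{2}s+s^{2m}\bigr)\mu+s^{m}=0.
\]
Substituting $\lambda=2s^{m}\mu$ and clearing denominators produces
\[
\lambda^{2}-2\bigl(1+[m]^{2}s+s^{2m}\bigr)\lambda+4s^{2m}=0,
\]
whose roots are exactly $\lambda_{1,m}$ and $\lambda_{2,m}$ as stated. Thus $\mu_{i}=\lambda_{i,m}/(2s^{m})$.

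\smallskip

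\noindent\textbf{Step 4: conclusion.} Since $M$ is $2\times 2$, $\operatorname{tr}(M^{n})=\mu_{1}^{n}+\mu_{2}^{n}$. Inserting $\mu_{i}=\lambda_{i,m}/(2s^{m})$ gives
\[
\operatorname{tr}\bigl(\psi_s(b_{n,m})\bigr)=\operatorname{tr}(M^{n})=2^{-n}s^{-nm}\bigl(\lambda_{1,m}^{n}+\lambda_{2,m}^{n}\bigr),
\]
which is the claimed formula.

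\smallskip

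\noindent\textbf{Expected obstacle.} There is no conceptual difficulty; the only place where one has to be careful is Step 1, where the explicit forms of $\psi_s(\sigma_1^{m})$ and especially $\psi_s(\sigma_2^{-m})$ must be written in terms of $[m]$ so that the cancellations in Step 2 are visible. All other steps are routine $2\times 2$ linear algebra.
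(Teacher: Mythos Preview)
Your proof is correct and follows essentially the same idea as the paper: both compute the Burau image of the base braid $\sigma_1^m\sigma_2^{-m}$, extract its eigenvalues, and pass to the $n$-th power. The only difference is packaging---the paper pulls out the factor $s^{-m}$, sets up recurrences for the entries of $\psi_s(b_{n,m})=s^{-nm}\begin{psmallmatrix}A_n&B_n\\C_n&D_n\end{psmallmatrix}$, and then diagonalises the resulting transfer matrix, whereas you bypass this by invoking $\operatorname{tr}(M^n)=\mu_1^n+\mu_2^n$ directly; your route is a little cleaner but not a different argument.
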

	
	\begin{proof}
		
		Notice first that one can easily show that  for any $m>0$, we have:
		\begin{center} $\psi_s(\sigma_1^{m})=\left [\begin{array}{cc}
				s^{m}& [m]\\
				&\\
				0&1
			\end{array} \right ] $, and
			$\psi_s(\sigma_2^{-m})= s^{-m}\left [\begin{array}{cc}
				s^{m}&0\\
				&\\
				s[m]&1
			\end{array} \right ] $.
		\end{center}
		
		Thus,
		
		$$\psi_s(\sigma_1^{m}\sigma_2^{-m})=
		s^{-m}\left [\begin{array}{cc}
			s^{2m}+s[m]^2& [m]\\
			&\\
			s[m]& 1
		\end{array} \right ].$$		
		
		Now, assume that  $\psi_s(b_{n,m}) = s^{-nm} \begin{bmatrix}
			A_n(s) & B_n(s) \\
			C_n(s) & D_n(s)
		\end{bmatrix}$, where all entries are polynomials on the variable $s$. Since $b_{n+1,m} = b_{n,m} \sigma_1^m \sigma_2^{-m}$, we get the following system of recurrence relations:
		\begin{align*}
			A_{n+1}(s) & = (s^{2m} +s [m]^2) A_n(s)  + s[m] B_n(s) \\
			B_{n+1}(s) & = [m]A_n(s)  + B_n(s) \\
			C_{n+1}(s) & = (s^{2m} +s [m]^2) C_n(s)  + s[m] D_n(s) \\
			D_{n+1}(s) & = [m]C_n(s)  + D_n(s)
		\end{align*}
		with $A_0(s) = D_0(s) = 1$ and $B_0(s) = C_0(s) = 0$ .  Hence:
		\begin{align*}
			\begin{bmatrix}
				A_{n+1}(s) \\
				B_{n+1}(s) \\
			\end{bmatrix} &= \begin{bmatrix}
				(s^{2m} +s [m]^2) & s[m] \\
				[m] & 1
			\end{bmatrix} \begin{bmatrix}
				A_{n}(s) \\
				B_{n}(s) \\
			\end{bmatrix}\\
			&= \begin{bmatrix}
				(s^{2m} +s [m]^2) & s[m] \\
				[m] & 1
			\end{bmatrix}^{n+1} \begin{bmatrix}
				A_{0}(s) \\
				B_{0}(s) \\
			\end{bmatrix} \\
			&= \begin{bmatrix}
				(s^{2m} +s [m]^2) & s[m] \\
				[m] & 1
			\end{bmatrix}^{n+1} \begin{bmatrix}
				1 \\
				0 \\
			\end{bmatrix}.
		\end{align*}
		
		Similarly, we also get for $C_n(s)$ and $ D_n(s)$:
		
		\begin{align*}
			\begin{bmatrix}
				C_{n+1}(s) \\
				D_{n+1}(s) \\
			\end{bmatrix}
			&= \begin{bmatrix}
				(s^{2m} +s [m]^2) & s[m] \\
				[m] & 1
			\end{bmatrix}^{n+1} \begin{bmatrix}
				0 \\
				1 \\
			\end{bmatrix}.
		\end{align*}
		
		Solving for the eigenvalues of $\begin{bmatrix} (s^{2m} +s [m]^2) & s[m] \\
			[m] & 1 \end{bmatrix}^n$ then diagonalizing the matrix gives us:
		\begin{align*}
			A_n(s) + D_n(s) &= 2^{-n} (\lambda_{1,m}^n + \lambda_{2,m}^n). \mbox{Thus,} \\
		 \mbox{tr}(\psi_s(b_{n,m})) &= s^{-n} (A_n(s) + D_n(s))\\
			&= 2^{-n} s^{-n} (\lambda_{1,m}^n + \lambda_{2,m}^n).
		\end{align*}
	\end{proof}
	
	\begin{coro}
		We have: ${\rm{tr}}(b_n)=2^{-n} s^{-n} (\lambda_1^n + \lambda_2^n)$, where
		
		$$\begin{array}{rl}
			\lambda_{1} & = (1 +   s + s^{2})+ \sqrt{(1 +  s + s^{2})^2  - 4s^{2}}\;\;  {\rm{ and }}\\
			
			\lambda_{2}&= (1 +   s + s^{2}) -  \sqrt{(1 +  s + s^{2})^2  - 4s^{2}}.
		\end{array} $$
	\end{coro}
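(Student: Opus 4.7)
The plan is to obtain the corollary as the specialization $m=1$ of Proposition \ref{Prop_Trace}. First I would observe that when $m=1$, the quantity $[m] = [1] = \frac{1-s}{1-s} = 1$, so the defining formulas for $\lambda_{1,m}$ and $\lambda_{2,m}$ collapse directly onto the $\lambda_1$ and $\lambda_2$ stated in the corollary, namely
\[
1 + [1]^2 s + s^{2\cdot 1} \;=\; 1+s+s^2,\qquad 4s^{2\cdot 1} = 4s^2.
\]
Similarly the prefactor $s^{-nm}$ becomes $s^{-n}$, and $b_{n,1} = (\sigma_1 \sigma_2^{-1})^n = b_n$.

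Substituting $m=1$ into the conclusion of Proposition \ref{Prop_Trace} therefore yields
\[
\operatorname{tr}(\psi_s(b_n)) \;=\; 2^{-n}\, s^{-n}\,(\lambda_1^n + \lambda_2^n),
\]
with $\lambda_1, \lambda_2$ exactly as in the statement of the corollary. This completes the argument.

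Essentially there is no obstacle here: the corollary is a direct specialization and the only thing to check is the bookkeeping that $[1]=1$ simplifies the eigenvalue expressions correctly. One could alternatively redo the eigenvalue computation for the matrix $\begin{bmatrix} s^2+s & s \\ 1 & 1\end{bmatrix}$ from scratch, but that is just a transcription of the $m=1$ case of the proof already given.
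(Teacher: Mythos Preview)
Your proof is correct and matches the paper's intended approach: the corollary is stated without proof precisely because it is the immediate specialization $m=1$ of Proposition~\ref{Prop_Trace}, and your verification that $[1]=1$ reduces $\lambda_{1,m},\lambda_{2,m}$ and $s^{-nm}$ to the stated $\lambda_1,\lambda_2$ and $s^{-n}$ is exactly what is needed. (One cosmetic slip: in your final parenthetical remark the off-diagonal entries of the matrix are swapped---for $m=1$ the matrix is $s^{-1}\begin{bmatrix} s^2+s & 1 \\ s & 1\end{bmatrix}$---but this does not affect the eigenvalues or the argument.)
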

	
	\newpage
	
	\begin{thm}\label{Prop_Trace_Cheby}
		The Jones polynomial of the generalized weaving knot $W(3,n,m)$ is given by:
		$$ V_{W(3,n,m)}(s) = -s-s^{-1}+2T_n(\frac{1 +  [m]^2 s + s^{2m}}{2s^m}).$$
	\end{thm}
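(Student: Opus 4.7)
The plan is to combine Birman's trace formula for the Jones polynomial of $3$-braid links with Proposition~\ref{Prop_Trace}, and then recognize the symmetric power-sum $\lambda_{1,m}^n+\lambda_{2,m}^n$ as a Chebyshev evaluation.

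First I would observe that the braid word $b_{n,m}=(\sigma_1^m\sigma_2^{-m})^n$ has exponent sum $e_{b_{n,m}}=n(m-m)=0$, so Birman's formula collapses to
\[
V_{\widehat{b_{n,m}}}(t) \;=\; t+t^{-1}+\mathrm{tr}(\psi_t(b_{n,m})).
\]
Under the substitution $s=-t$ introduced at the end of Section~2, the term $t+t^{-1}$ becomes $-s-s^{-1}$, which accounts for the first two terms of the target expression.

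Next I would insert Proposition~\ref{Prop_Trace}: $\mathrm{tr}(\psi_s(b_{n,m}))=2^{-n}s^{-nm}(\lambda_{1,m}^n+\lambda_{2,m}^n)$. To turn the power sum into a Chebyshev polynomial, I would use the classical identity
\[
\lambda_1^n+\lambda_2^n \;=\; 2\,(\lambda_1\lambda_2)^{n/2}\,T_n\!\left(\tfrac{\lambda_1+\lambda_2}{2\sqrt{\lambda_1\lambda_2}}\right),
\]
which follows at once from writing $\lambda_j/\sqrt{\lambda_1\lambda_2}=e^{\pm i\theta}$ and invoking $T_n(\cos\theta)=\cos(n\theta)$. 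Reading off from the statement of Proposition~\ref{Prop_Trace}, we have
\[
\lambda_{1,m}+\lambda_{2,m}=2\bigl(1+[m]^2 s+s^{2m}\bigr),\qquad \lambda_{1,m}\lambda_{2,m}=\bigl(1+[m]^2 s+s^{2m}\bigr)^2-\bigl[(1+[m]^2 s+s^{2m})^2-4s^{2m}\bigr]=4s^{2m},
\]
so $\sqrt{\lambda_{1,m}\lambda_{2,m}}=2s^m$. Substituting gives $\lambda_{1,m}^n+\lambda_{2,m}^n=2^{n+1}s^{nm}\,T_n\!\bigl(\tfrac{1+[m]^2 s+s^{2m}}{2s^m}\bigr)$, and the prefactor $2^{-n}s^{-nm}$ in the trace cancels perfectly to leave $2T_n\!\bigl(\tfrac{1+[m]^2 s+s^{2m}}{2s^m}\bigr)$. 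Putting everything together yields the claimed formula.

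This proof is essentially a bookkeeping exercise, so there is no real obstacle; the only subtle point is choosing the correct square root so that $\sqrt{\lambda_{1,m}\lambda_{2,m}}=2s^m$ (as opposed to $-2s^m$) matches the sign convention implicit in Proposition~\ref{Prop_Trace} and ensures the cancellation of the $s^{-nm}$ factor. Since $T_n$ is an even function of its argument when $n$ is even and odd when $n$ is odd, either choice of square root for the power sum leads to the same Chebyshev expression, so the identity is robust against this ambiguity.
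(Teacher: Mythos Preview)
Your proof is correct and follows essentially the same route as the paper: Birman's formula reduces the Jones polynomial to $-s-s^{-1}+\mathrm{tr}(\psi_s(b_{n,m}))$, Proposition~\ref{Prop_Trace} supplies the trace, and the power sum $\lambda_{1,m}^n+\lambda_{2,m}^n$ is recognized as a Chebyshev value. The only cosmetic difference is that the paper invokes the closed form $T_n(x)=\tfrac12\bigl((x-\sqrt{x^2-1})^n+(x+\sqrt{x^2-1})^n\bigr)$ directly, whereas you phrase the same fact via the symmetric-function identity $\lambda_1^n+\lambda_2^n=2(\lambda_1\lambda_2)^{n/2}T_n\!\bigl(\tfrac{\lambda_1+\lambda_2}{2\sqrt{\lambda_1\lambda_2}}\bigr)$; your version is in fact a bit more explicit about why the exponent sum vanishes and how $t+t^{-1}$ becomes $-s-s^{-1}$.
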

	
	\begin{proof}
		The Jones polynomial can be obtained through the Burau representation:
		$$V_{W(3,n,m)}(s) = -s -s^{-1}+{\rm{tr}}(\psi_s(\sigma_1^m \sigma_2 ^{-m})^n).$$
		
		It is known that $T_n(x) = \frac{1}{2}((x - \sqrt{x^2 -1})^n + (x + \sqrt{x^2 -1})^n)$, hence it becomes clear that:
		$$T_n(\frac{1 +  [m]^2 s + s^{2m}}{2s^m}) = 2^{-n-1} s^{-nm} (\lambda_1^n + \lambda_2^n) = \frac{{\rm{tr}}(\psi_s(\sigma_1^m \sigma_2 ^{-m})^n)}{2}.$$
		Consequently, $$V_{W(3,n,m)}(s) = -s -s^{-1}+2T_n(\frac{1 +  [m]^2 s + s^{2m}}{2s^m}).$$
	\end{proof}
	
	\begin{coro}\label{WeaveJones} The Jones polynomial of the weaving knot $W(3,n)$ is given by:
		$$ V_{W(3,n)}(s) = -s-s^{-1}+s^{-n}C_n(s) =\sum_{k=0}^{2n} a_k s^{k-n},$$ where
		$$a_k = \begin{cases}
			c_{n,k} & |k-n| \neq 1\\
			c_{n,k}-1 & |k-n| = 1
		\end{cases}$$
	\end{coro}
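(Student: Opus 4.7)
The plan is to specialize Theorem~\ref{Prop_Trace_Cheby} to $m=1$ and then recognize the resulting Chebyshev term as a rescaled Lucas rank polynomial via Proposition~\ref{Prop_chebyshev}.

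First I would observe that for $m=1$ we have $[1] = \frac{1-s}{1-s} = 1$, and $s^{2m} = s^2$, $s^m = s$. Substituting these into the formula of Theorem~\ref{Prop_Trace_Cheby} gives
\[
V_{W(3,n)}(s) \;=\; -s - s^{-1} + 2\, T_n\!\left(\frac{1 + s + s^{2}}{2s}\right).
\]
Next, by Proposition~\ref{Prop_chebyshev} applied with $q=s$, we have
\[
C_n(s) \;=\; 2 s^n T_n\!\left(\frac{1+s+s^2}{2s}\right),
\]
so $2 T_n\!\left(\frac{1+s+s^2}{2s}\right) = s^{-n} C_n(s)$. Plugging this into the preceding display yields the first claimed equality,
\[
V_{W(3,n)}(s) \;=\; -s - s^{-1} + s^{-n} C_n(s).
\]

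For the coefficient description, I would expand $C_n(s) = \sum_{k=0}^{2n} c_{n,k} s^k$, so that $s^{-n} C_n(s) = \sum_{k=0}^{2n} c_{n,k} s^{k-n}$. Adding the correction $-s - s^{-1}$ only affects the terms of degree $+1$ and $-1$ in $s$, i.e.\ the indices $k$ with $k-n = \pm 1$, equivalently $|k-n|=1$. Subtracting $1$ from $c_{n,n+1}$ and $c_{n,n-1}$ and leaving every other coefficient unchanged gives exactly the piecewise formula for $a_k$ stated in the corollary. There is no real obstacle here: both inputs (the specialization of Theorem~\ref{Prop_Trace_Cheby} and Proposition~\ref{Prop_chebyshev}) are already in hand, and the only thing to verify is the bookkeeping of the two boundary terms $-s$ and $-s^{-1}$, which is immediate since $c_{n,n\pm 1}$ are positive integers when $n\geq 2$ so that the modified coefficients remain meaningful.
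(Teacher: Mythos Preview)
Your proof is correct and follows essentially the same route as the paper: specialize Theorem~\ref{Prop_Trace_Cheby} at $m=1$, invoke Proposition~\ref{Prop_chebyshev} to rewrite $2T_n\!\left(\frac{1+s+s^2}{2s}\right)$ as $s^{-n}C_n(s)$, and then read off the coefficients after absorbing the $-s-s^{-1}$ correction. Your write-up is in fact slightly more explicit than the paper's, which compresses the argument into two lines.
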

	
	\begin{proof}
		By Theorem  \ref{Prop_Trace_Cheby}, we know that ${\rm{tr}}(\psi_s( (\sigma_1 \sigma_2 ^{-1})^n )) = -s-s^{-1}+2T_n(\frac{1 + s + s^{2}}{2s})$, and by Proposition \ref{Prop_chebyshev} we have that:
		$2T_n(\frac{1 + s + s^{2}}{2s}) =s^{-n}C_n(s) =\sum_{k=0}^{2n} c_{n,k} s^{k-n}.$
	\end{proof}
	
	The determinant of an oriented link $L$ is a classical numerical invariant of links, $\det(L)$, which can be obtained by evaluation of  the Jones polynomial at $t=-1$;  $\det(L)=|V_L(-1)|$.  The  Lucas  numbers $L_{k}$ are defined  recursively by the  relations: $L_0=2, L_1=1$ and $L_{k}=L_{k-1}+L_{k-2}$ for any $k\geq 2$. The generalized Lucas numbers $L_{m,k}$ are defined by the relation
	$L_0=2, L_1=1$ and $L_{m,k}=mL_{m,k-1}+L_{m,k-2}$.  The relationship between the determinants of weaving links and the Lucas numbers has been observed first in \cite{SC}.
	Then this relation was extended to the wider class of generalized weaving links  in \cite{JP}. These results can be  obtained as a direct consequence of Theorem \ref{Prop_Trace_Cheby} above.\\
	
	\begin{coro}\label{lucasnumbers} For any $n\geq 1$ and $m \geq 1$, we have:
		$$\begin{array}{ll}
			\det(W(3,n)) &=L_{2n}-2 \\
			\det(W(3,n,m)) &=L_{m,2n}-2.
		\end{array}
		$$
	\end{coro}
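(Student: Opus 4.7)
The plan is to specialise Theorem \ref{Prop_Trace_Cheby} at the value of $s$ that corresponds to $t=-1$ and then identify the resulting Chebyshev value with a generalised Lucas number. Since $s=-t$, the substitution $t=-1$ amounts to $s=1$, so I will compute $V_{W(3,n,m)}(1)$ and take absolute value.

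First I would evaluate the ingredients of the formula $V_{W(3,n,m)}(s)=-s-s^{-1}+2T_n\!\left(\dfrac{1+[m]^2 s+s^{2m}}{2s^m}\right)$ at $s=1$. The quantum integer $[m]=(1-s^m)/(1-s)$ extends continuously at $s=1$ to the integer $m$, while $s^{2m}=s^m=1$. Hence the argument of $T_n$ collapses to $(m^2+2)/2$, and
$$V_{W(3,n,m)}(1) \;=\; -2 \;+\; 2\,T_n\!\left(\tfrac{m^2+2}{2}\right).$$

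Next I would recognise the right‐hand side in terms of generalised Lucas numbers. Let $\alpha,\beta$ be the roots of $x^2-mx-1=0$, so $\alpha+\beta=m$, $\alpha\beta=-1$, and $L_{m,k}=\alpha^k+\beta^k$ by the standard Binet formula (this matches the recursion and initial values given in the paper). Then $\alpha^2\beta^2=1$ and $\alpha^2+\beta^2=m^2+2$, so setting $y=\alpha^2$ gives $y+y^{-1}=m^2+2$. Using the classical identity $T_n\!\left(\tfrac{y+y^{-1}}{2}\right)=\tfrac{y^n+y^{-n}}{2}$, which follows at once from the closed form $T_n(x)=\tfrac12\bigl((x-\sqrt{x^2-1})^n+(x+\sqrt{x^2-1})^n\bigr)$ invoked in the proof of Theorem \ref{Prop_Trace_Cheby}, I conclude
$$2\,T_n\!\left(\tfrac{m^2+2}{2}\right) \;=\; \alpha^{2n}+\beta^{2n} \;=\; L_{m,2n}.$$

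Plugging this back gives $V_{W(3,n,m)}(1)=L_{m,2n}-2$. Because $L_{m,2n}\geq 2$ for $n,m\geq 1$ (easy induction from $L_{m,0}=2$ and positivity of $L_{m,2k}$ for $k\geq 1$), the absolute value is not needed and $\det(W(3,n,m))=L_{m,2n}-2$. Specialising to $m=1$ recovers the classical Lucas numbers $L_{1,k}=L_k$ and yields $\det(W(3,n))=L_{2n}-2$, which is the first identity. The main (very mild) obstacle is the identification of $T_n((m^2+2)/2)$ with $L_{m,2n}/2$; once this Chebyshev–Lucas link is in hand, the corollary is immediate from Theorem \ref{Prop_Trace_Cheby}.
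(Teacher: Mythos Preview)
Your proof is correct and follows essentially the same route as the paper: evaluate Theorem~\ref{Prop_Trace_Cheby} at $s=1$ to obtain $-2+2T_n\bigl(\tfrac{m^2+2}{2}\bigr)$, then identify this with $L_{m,2n}-2$ via the closed forms of $T_n$ and the Binet formula for $L_{m,k}$. The paper carries out the identification by expanding the radical expression $\tfrac{2+m^2}{2}\pm\sqrt{\bigl(\tfrac{2+m^2}{2}\bigr)^2-1}$ and matching it with $\bigl(\tfrac{m\pm\sqrt{m^2+4}}{2}\bigr)^2$, whereas you use the equivalent (and slightly tidier) substitution $y=\alpha^2$ together with $T_n\bigl(\tfrac{y+y^{-1}}{2}\bigr)=\tfrac{y^n+y^{-n}}{2}$; your explicit remark that $L_{m,2n}\ge 2$ justifies dropping the absolute value, which the paper leaves implicit.
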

	
	\begin{proof}
		Using  the recurrence relation for $L_{m,k}$ we obtain $$L_{m,k} = \left(\frac{m+\sqrt{m^2 +4}}{2}\right)^k + \left(\frac{m-\sqrt{m^2 +4}}{2}\right)^k.$$
		
		Then notice that:
		\begin{align*}
			\det(W(3,n,m)) 	&= |V_{W(3,n,m)}(1)|\\
			&= -2 + 2T_n(\frac{2 +  m^2}{2}) \\
			&= -2 + \left(\frac{2+m^2}{2} + \sqrt{\left(\frac{2+m^2}{2}\right)^2 -1}\right)^n + \left(\frac{2+m^2}{2} - \sqrt{\left(\frac{2+m^2}{2}\right)^2 -1}\right)^n\\
			&= -2 + \left(\frac{m+\sqrt{m^2 +4}}{2}\right)^{2n} + \left(\frac{m-\sqrt{m^2 +4}}{2}\right)^{2n}\\
			&= -2 + L_{m,2n}.
		\end{align*}
Similarly, $\det(W(3,n))=L_{n} -2$.
	\end{proof}
	
	We close this section by the following proposition which will be used later to  prove that the  Alexander polynomial of weaving  knots  is trapezoidal.
	
	\begin{prop}\label{recurrence}
		The Whitney numbers of the Lucas lattice $c_{n,k}$ satisfy the following recurrence relation:
		\begin{equation}
			\begin{aligned}
				c_{n,0} &= 1\\
				c_{n,1} &= n = c_{n-1,1} + c_{n-1,0}\\
				c_{n,k} &= c_{n-1,k}+c_{n-1,k-1}+c_{n-1,k-2}-c_{n-2,k-2}.\\
			\end{aligned}
		\end{equation}
	\end{prop}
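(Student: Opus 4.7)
My plan is to lift the stated recurrence from the level of coefficients to the level of polynomials, using Proposition \ref{Prop_chebyshev} together with the three-term recurrence $T_{n+1}(x) = 2xT_n(x) - T_{n-1}(x)$ satisfied by the Chebyshev polynomials of the first kind.

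First, I would apply Proposition \ref{Prop_chebyshev} to write $C_n(q) = 2q^n T_n(x)$ with $x = \tfrac{1+q+q^2}{2q}$, and substitute this into the Chebyshev recurrence. Multiplying both sides by $2q^{n+1}$ and using the key identity $2qx = 1+q+q^2$, a brief computation gives the polynomial recurrence
\[
C_{n+1}(q) \;=\; (1+q+q^2)\, C_n(q) \;-\; q^2\, C_{n-1}(q),
\]
valid for every $n \geq 1$. Shifting the index and reading off the coefficient of $q^k$ on both sides yields exactly $c_{n,k} = c_{n-1,k} + c_{n-1,k-1} + c_{n-1,k-2} - c_{n-2,k-2}$ for all $n \geq 2$ and $k \geq 2$, which is the content of the third line of the proposition.

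For the initial conditions, I would argue directly from the explicit formula for $c_{n,k}$ recalled in Section 2. Setting $k=0$ leaves only the term $j=0$ and produces $c_{n,0} = \tfrac{n}{n}\binom{n}{n}\binom{-1}{0} = 1$; setting $k=1$ again leaves only $j=0$ and gives $c_{n,1} = \binom{n}{n-1} = n$. The relation $c_{n,1} = c_{n-1,1} + c_{n-1,0}$ then reduces to the trivial arithmetic identity $n = (n-1) + 1$. I do not foresee a real obstacle: the argument is essentially a one-line transfer of the Chebyshev three-term recurrence through Proposition \ref{Prop_chebyshev}. The only point requiring a little care is to verify that the substitution $2qx = 1+q+q^2$ and the ensuing coefficient extraction are both legitimate in the Laurent sense (since $C_n$ itself is a polynomial, this is clean), and to check that the anomalous value $C_0(q) = 2$ plays no role in the range $n \geq 2$ where the recurrence is actually being invoked.
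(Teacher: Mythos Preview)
Your proof is correct, and it takes a genuinely different route from the paper's argument. The paper establishes the main recurrence by importing from \cite{MZ} two auxiliary identities that link the Lucas Whitney numbers $c_{n,k}$ to the Fibonacci Whitney numbers $f_{n,k}$, namely $c_{n+2,k+2}=f_{2n+4,k+2}+f_{2n,k}$ together with a four-term recurrence for the $f_{n,k}$, and then substitutes one into the other. Your argument bypasses the Fibonacci lattice entirely: you feed the Chebyshev three-term recurrence through Proposition~\ref{Prop_chebyshev} to obtain the clean polynomial identity $C_{n+1}(q)=(1+q+q^2)C_n(q)-q^2C_{n-1}(q)$, from which the coefficient recurrence drops out immediately. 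This is shorter, more self-contained (it uses only material already stated in the paper), and makes the structural reason for the recurrence transparent. The paper's approach, on the other hand, records the connection with the Fibonacci lattice, which may be of independent combinatorial interest. For the boundary cases $k=0,1$ both arguments coincide, appealing to the explicit formula for $c_{n,k}$.
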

	
	\begin{proof}
		For $k=0$ and $k=1$ this follows  straightforward from the explicit formula for $c_{n,k}$.
		
		For $k\geq 0$, we have   the following identity relating the sequences  $c_{n,k}$  and $f_{n,k}$,  \cite{MZ}:
		\begin{equation}\label{cnk}
			c_{n+2, k+2} = f_{2n+4, k+2} + f_{2n, k},
		\end{equation}
		where \begin{equation}\label{fnk}
			f_{n+4,k+2} = f_{n+2,k+2} + f_{n+2,k+1} + f_{n+2,k} - f_{n,k}.
		\end{equation}
		
		Substituing  \ref{fnk} in \ref{cnk} we get:
		\begin{align*}
			c_{n+2, k+2} &= f_{2n+2,k+2} + f_{2n+2,k+1} + f_{2n+2,k} - f_{2n,k} + f_{2n-2,k}
			\\&\phantom{=} + f_{2n-2,k-1} + f_{2n-2,k-2} - f_{2n-4,k-2}\\
			&= c_{n+1, k+2} + c_{n+1, k+1} + c_{n+1, k} - c_{n, k}.
		\end{align*}
		
		Giving us $c_{n,k} = c_{n-1,k}+c_{n-1,k-1}+c_{n-1,k-2}-c_{n-2,k-2}$, for $k\geq 2$.
	\end{proof}
	\vspace{5 pt}
	\section{Proof of the Main Theorem}
This section is devoted to the proof of Theorem \ref{main}. First,  we shall give explicit formulas for the coefficients of the Alexander polynomial of weaving links $W(3,n)$.  More precisely, we will prove that:
$\Delta_{W(3,n)} (s) = (\sum_{k=0}^{2n-2} \alpha_{n,k} s^{k})$ where
$$ \alpha_{n,k} = \sum_{i=0}^{n-1} (-1)^{k-n+i} \frac{2n(n+i)!}{(n-i-1)! (2i+2)!}  \begin{pmatrix}
	i;3 \\
	k+i-n+1
\end{pmatrix}. $$

We know that $T_n(x) = n\sum_{k=0}^n (-2)^k \frac{(n+k-1)!}{(n-k)! (2k)!} (1-x)^k$, then:

\begin{align*}
	2-2T_n(\frac{1+s+s^2}{2s}) 	&= 2 - 2n\sum_{k=0}^n (-2)^k \frac{(n+k-1)!}{(n-k)! (2k)!} (1-\frac{1+s+s^2}{2s})^k \\
	&= 2 - 2 n\sum_{k=0}^n (2)^k \frac{(n+k-1)!}{(n-k)! (2k)!} (\frac{1-s+s^2}{2s})^k \\
	&=  - 2 n\sum_{k = 1}^n (2)^k \frac{(n+k-1)!}{(n-k)! (2k)!} (\frac{1-s+s^2}{2s})^k \\
	&= -2(\frac{1-s+s^2}{2s}) n\sum_{k = 1}^n (2)^k \frac{(n+k-1)!}{(n-k)! (2k)!} (\frac{1-s+s^2}{2s})^{k-1}\\
	&= \frac{-(1-s+s^2)}{s} n\sum_{k = 1}^n (2)^k \frac{(n+k-1)!}{(n-k)! (2k)!} (\frac{1-s+s^2}{2s})^{k-1}\\
	&= \frac{-(1-s+s^2)}{s} n\sum_{k = 0}^{n-1} (2)^{k+1} \frac{(n+k)!}{(n-k-1)! (2k+2)!} (\frac{1-s+s^2}{2s})^{k}\\
	&= \frac{-(1-s+s^2)}{s} n\sum_{k = 0}^{n-1}   \frac{2(n+k)!}{(n-k-1)! (2k+2)!} \sum_{l=0}^{2k} (-1)^{l} \begin{pmatrix}
		k;3 \\
		l
	\end{pmatrix} s^{l-k}\\
	&= \frac{-(1-s+s^2)}{s} n\sum_{k = 0}^{n-1}   \frac{2(n+k)!}{(n-k-1)! (2k+2)!} \sum_{l=-k}^{k} (-1)^{l+k} \begin{pmatrix}
		k;3 \\
		l+k
	\end{pmatrix} s^{l}\\
	&= -\frac{1-s+s^2}{s} \sum_{l = -n+1}^{n-1}\ \sum_{k=0}^{n-1} (-1)^{l+k} \frac{2n(n+k)!}{(n-k-1)! (2k+2)!}  \begin{pmatrix}
		k;3 \\
		l
	\end{pmatrix} s^{l}\\
	&= -\frac{1-s+s^2}{s} \sum_{l = -n+1}^{n-1}\ \sum_{k=0}^{n-1} (-1)^{l+k} \frac{2n(n+k)!}{(n-k-1)! (2k+2)!}  \begin{pmatrix}
		k;3 \\
		l+k
	\end{pmatrix} s^{l}\\
	&= -\frac{1-s+s^2}{s} \sum_{l = -n+1}^{n-1}\ \sum_{k=0}^{n-1} (-1)^{l+k} \frac{2n(n+k)!}{(n-k-1)! (2k+2)!}  \begin{pmatrix}
		k;3 \\
		l+k
	\end{pmatrix} s^{l}.
\end{align*}

Therefore, up to some multiplication of a unit in $\mathbb{Z}[s,s^{-1}]$, we have:  $$\Delta_{W(3,n)} (s)  = \frac{2-2Tn(\frac{1+s+s^2}{2s})}{1-s+s^2} = - \sum_{l = -n+1}^{n-1}\ \sum_{k=0}^{n-1} (-1)^{l+k} \frac{2n(n+k)!}{(n-k-1)! (2k+2)!}  \begin{pmatrix}
	k;3 \\
	l+k
\end{pmatrix} s^{l}.$$
We could then shift the polynomial by $s^{n-1}$ to give us
$$\Delta_{W(3,n)} (s)  =  \sum_{l = 0}^{2n-2}\ \sum_{k=0}^{n-1} (-1)^{l-n+k+1} \frac{2n(n+k)!}{(n-k-1)! (2k+2)!}  \begin{pmatrix}
	k;3 \\
	l+k-n+1
\end{pmatrix} s^{l}.$$

Now, in order to prove the second part of  Theorem \ref{main} we  need the following Lemma.

\begin{lemm}
	Let $\Delta_{W(3,n)} (s) = (\sum_{k=0}^{2n-2} \alpha_{n,k} s^{k})$, then $\alpha_{k,n}$ satisfies the following recurrence relations:
	\begin{equation*}
		\begin{aligned}
			\alpha_{n,0} &= 1\\
			\alpha_{n,1} &= \alpha_{n-1,1} + \alpha_{n-1,0}\\
			\alpha_{n,k} &= \alpha_{n-1,k} + \alpha_{n-1,k-1} + \alpha_{n-1,k-2} - \alpha_{n-2,k-2} \text{ \ for $1<k<n-1$}\\
			\alpha_{n,n-1} &= \alpha_{n-1,n-1} + \alpha_{n-1,n-2} + \alpha_{n-1,n-3} - \alpha_{n-2,n-3} +2.
		\end{aligned}
	\end{equation*}
\end{lemm}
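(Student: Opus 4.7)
The strategy is to convert the rational-function description of $\Delta_{W(3,n)}(s)$ derived just above the lemma into a polynomial three-term recurrence that mirrors Proposition \ref{recurrence}. The bridge will be the Lucas rank polynomial $C_n(s)$. First, Proposition \ref{Prop_chebyshev} gives $2T_n\left(\frac{1+s+s^2}{2s}\right)=s^{-n}C_n(s)$. Substituting this into $\Delta_{W(3,n)}(s)=\frac{2-2T_n(\cdot)}{1-s+s^2}$ and fixing the sign/shift normalisation so that the polynomial form begins with $\alpha_{n,0}=1$, one obtains the key identity
\[(1-s+s^2)\,\Delta_{W(3,n)}(s) \;=\; C_n(s)-2s^n.\]
Next, I would note that Proposition \ref{recurrence} (together with the base cases $k=0,1$, easily verified from $c_{n,0}=1$ and $c_{n,1}=n$) is equivalent to the polynomial identity $C_n(s)=(1+s+s^2)C_{n-1}(s)-s^2 C_{n-2}(s)$ for $n\ge 2$, which in fact drops out immediately from the Chebyshev recurrence $T_{n+1}=2xT_n-T_{n-1}$ via Proposition \ref{Prop_chebyshev}.

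Now set $P_n(s):=\Delta_{W(3,n)}(s)-(1+s+s^2)\Delta_{W(3,n-1)}(s)+s^2\Delta_{W(3,n-2)}(s)$. Multiplying by $(1-s+s^2)$ and applying the key identity three times, the $C_m$-terms cancel and the correction terms collapse to $2s^{n-1}(1-s+s^2)$. Cancelling the common factor $(1-s+s^2)$ yields
\[\Delta_{W(3,n)}(s) \;=\; (1+s+s^2)\Delta_{W(3,n-1)}(s)-s^2\Delta_{W(3,n-2)}(s)+2s^{n-1}.\]
Extracting the coefficient of $s^k$ on both sides (with the convention $\alpha_{m,k}=0$ for $k<0$ or $k>2m-2$) gives
\[\alpha_{n,k} \;=\; \alpha_{n-1,k}+\alpha_{n-1,k-1}+\alpha_{n-1,k-2}-\alpha_{n-2,k-2}+2\delta_{k,n-1},\]
which specialises to each of the four statements of the lemma: the $+2$ correction appears precisely at $k=n-1$, and induction on $n$ starting from $\Delta_{W(3,1)}(s)=1$ recovers $\alpha_{n,0}=1$ and $\alpha_{n,1}=\alpha_{n-1,1}+\alpha_{n-1,0}$.

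The main obstacle is the sign/shift bookkeeping in the key identity, since the Alexander polynomial is only defined up to a unit in $\mathbb{Z}[s^{\pm 1}]$: one has to verify that the specific polynomial form $\sum_k \alpha_{n,k}s^k$ singled out just above the lemma is precisely the one for which $(1-s+s^2)\Delta_{W(3,n)}(s)=C_n(s)-2s^n$ holds (a direct check at $n=1,2$ pins it down). Once that identity is established, the remainder of the argument is a purely formal polynomial manipulation.
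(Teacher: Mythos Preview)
Your argument is correct, and it is tidier than the paper's. Both proofs start from the same identity $\Delta_{W(3,n)}(s)\equiv\dfrac{2-2T_n((1+s+s^2)/2s)}{1-s+s^2}$, but they extract the recurrence differently. The paper stays at the coefficient level throughout: it multiplies $2s^n-C_n(s)$ by $(1+s)$ to introduce auxiliary numbers $c^*_{n,k}$, then divides by $1+s^3$ via a geometric series to express $\alpha_{n,k}$ as an alternating $3$-step sum of the $c^*_{n,k}$, and finally transports the coefficient recurrence of Proposition~\ref{recurrence} (proved there through Fibonacci--Lucas identities) across these two layers. You work instead with the generating polynomials: the Chebyshev recurrence together with Proposition~\ref{Prop_chebyshev} gives $C_n=(1+s+s^2)C_{n-1}-s^2C_{n-2}$ in one line, and then applying your key identity $(1-s+s^2)\Delta_{W(3,n)}(s)=C_n(s)-2s^n$ three times collapses everything to the single polynomial recurrence
\[
\Delta_{W(3,n)}(s)=(1+s+s^2)\Delta_{W(3,n-1)}(s)-s^2\Delta_{W(3,n-2)}(s)+2s^{n-1},
\]
from which the four displayed cases are read off coefficientwise. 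This bypasses the $c^*_{n,k}$ intermediaries entirely and makes the source of the $+2$ at $k=n-1$ transparent (it is exactly what remains of the $-2s^m$ corrections under the Chebyshev-type recurrence). The only point needing care is, as you flag, the unit ambiguity in fixing the normalisation $(1-s+s^2)\Delta_{W(3,n)}(s)=C_n(s)-2s^n$; your check at $n=1,2$ does pin it down, and in fact both sides visibly have constant term $1$ and degree $2n$, which already forces the correct unit.
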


\begin{proof}
	Recall that the Burau representation defines the  Alexander polynomial up to
the multiplication by a unit. In the following calculations,
 we shall use the symbol   $ f \equiv g $  to indicate that   the
polynomials $f$ and $g \in \mathbb{Z}[s,s^{-1}]$ satisfy  $f = u . g$ for some unit $u \in \mathbb{Z}[s,s^{-1}]$.
	
	Since $\Delta_{W(3,n)} (s) \equiv \frac{1+s}{1+s^3} \det(I-\psi_s((\sigma_1 \sigma_2 ^{-1})^n))$ and $\det(I-\psi_s((\sigma_1 \sigma_2 ^{-1})^n))$ is the characteristic polynomial evaluated at $\lambda=1$ of the $2 \times 2$ matrix $\psi_s((\sigma_1 \sigma_2 ^{-1})^n)$ then:
	$$\Delta_{W(3,n)} (s) \equiv \frac{1+s}{1+s^3} (1- {\rm{tr}}(\psi_s( (\sigma_1 \sigma_2 ^{-1})^n ))+ \det(\psi_s( (\sigma_1 \sigma_2 ^{-1})^n ))).$$
	
	Notice that $\det(\psi_s( (\sigma_1 \sigma_2 ^{-1})^n )) = \det(s^{-1} \begin{bmatrix}
		s^2+s & 1\\
		s & 1
	\end{bmatrix})^n = 1.$
	
	Therefore:
	\begin{align*}
		\Delta_{W(3,n)} (s) &\equiv \frac{1+s}{1+s^3} (2- {\rm{tr}}(\psi_s( (\sigma_1 \sigma_2 ^{-1})^n )))\\
		&\equiv \frac{1+s}{1+s^3} (2s^n- \sum_{k=0}^{2n} c_{n,k} s^{k})\\
		&\equiv (1+s)(2s^n- \sum_{k=0}^{2n} c_{n,k} s^{k})(\sum_{i=0} (-s)^{3i}).
	\end{align*}
	
	Now if we define $c^*_{n,k}$ to be:
	\begin{align*}
		c^*_{n,0} &= c_{n,0} = 1\\
		c^*_{n,k} &= c_{n,k}+c_{n,k-1} \text{ \ for $0<k<n$ }\\
		c^*_{n,n} &= c_{n,n} + c_{n,n-1} -2\\
		c^*_{n,k} &= c^*_{n,2n-k+1} \text{ \ for $k>n$ }
	\end{align*}
	and $\alpha_{n,k} = c^*_{n,k} - c^*_{n,k-3}+ c^*_{n,k-6} - ... = \sum_{i=0}^{k/3} (-1)^{i} c^*_{n,k-3i}$, then we get:
	\begin{align*}
		\Delta_{W(3,n)} (s) &\equiv ( \sum_{k=0}^{2n+1} c^*_{n,k} s^{k})(\sum_{i=0} (-s)^{3i})\\
		&\equiv  \sum_{k=0}^{2n-2} \alpha_{n,k} s^{k}.
	\end{align*}
	
	Notice that by Proposition  \ref{recurrence} we have:\\
	\noindent
	If  $k=1$, then
	\begin{align*}
		c^*_{n,1} 	&= c_{n,1}+c_{n,0}\\
		&= c_{n-1,1} + c_{n-1,0} + c_{n-1,0}\\
		&= c^*_{n-1,1} + c^*_{n-1,0}.
	\end{align*}
	If  $2\leq k < n-1$, then
	\begin{align*}
		c^*_{n,k} 	&= c_{n,k}+c_{n,k-1}\\
		&= c_{n-1,k}+c_{n-1,k-1}+c_{n-1,k-2}-c_{n-2,k-2}\\
		&\phantom{=} c_{n-1,k-1}+c_{n-1,k-2}+c_{n-1,k-3}-c_{n-2,k-3}\\
		&= c^*_{n-1,k} + c^*_{n-1,k-1} + c^*_{n-1,k-2} - c^*_{n-2,k-2}.
	\end{align*}
	If  $k=n-1$, then
	\begin{align*}
		c^*_{n,n-1} 	&= c_{n,n-1}+c_{n,n-2}\\
		&= c_{n-1,n-1}+c_{n-1,n-2}+c_{n-1,n-3}-c_{n-2,n-3}\\
		&\phantom{=} c_{n-1,n-2}+c_{n-1,n-3}+c_{n-1,n-4}-c_{n-2,n-4}\\
		&= c^*_{n-1,k} + c^*_{n-1,k-1} + c^*_{n-1,k-2} - c^*_{n-2,k-2}+2.
	\end{align*}
	
	Hence it follows from the definition of $\alpha_{n,k}$ and the recurrence relation above  that:
	\begin{equation}\label{arecur}
		\begin{aligned}
			\alpha_{n,0} &= 1\\
			\alpha_{n,1} &= \alpha_{n-1,1} + \alpha_{n-1,0}\\
			\alpha_{n,k} &= \alpha_{n-1,k} + \alpha_{n-1,k-1} + \alpha_{n-1,k-2} - \alpha_{n-2,k-2} \text{ \ for $1<k<n-1$}\\
			\alpha_{n,n-1} &= \alpha_{n-1,n-1} + \alpha_{n-1,n-2} + \alpha_{n-1,n-3} - \alpha_{n-2,n-3} +2.
		\end{aligned}
	\end{equation}
\end{proof}

Notice that this immediately gives us $\alpha_{n,0} < \alpha_{n,1} < \alpha_{n,2}$. For $k>2$,
we  use the above system to show that for $2<k<n-1$ we have: $$\alpha_{n,k} = \alpha_{n-1,k} + \alpha_{n-1,k-1} + \alpha_{n-2,k-3} + \alpha_{n-3,k-5} + ... = \alpha_{n-1,k} + \sum_{i=1}^{\frac{k-1}{2}} \alpha_{n-i,k-2i+1}$$
and for $k=n-1$ we have:
$$\alpha_{n,k} = 2+ \alpha_{n-1,k} + \sum_{i=1}^{\frac{k-1}{2}} \alpha_{n-i,k-2i+1}.$$

This directly comes from the recurrence relation \ref{arecur}. Since $\alpha_{n-1,k-2}$ is related to  $\alpha_{n-2,k-2}$
we can expand down to obtain the above relation. Notice that  $\Delta_{W(3,2)}(s) = s^{-1} + 3 + s$ is trapezoidal, thus  we can induct on $n$ to show that $\Delta_{W(3,n)}(s)$ is trapezoidal. Indeed,  if we assume that  $\Delta_{W(n-1,3)}(s)$ is trapezoidal we obtain:
\begin{align*}
	\alpha_{n,k} 	&\geq  \alpha_{n-1,k} + \sum_{i=1}^{\frac{k-1}{2}} \alpha_{n-i,k-2i+1}\\
	&> 	\alpha_{n-1,k-1} + \sum_{i=1}^{\frac{k-1}{2}} \alpha_{n-i,k-2i}\\
	& = \alpha_{n,k-1}.
\end{align*}

Hence completing our proof since $\alpha_{n,k-1} < \alpha_{n,k}$ for $k\leq n-1$. Moreover, $\Delta_{W(3,n)}(s)$ is a symmetric polynomial, thus  it has its center at $n-1$. Therefore,   it satisfies Fox's trapezoidal conjecture and the length of the satble part is $r = 0 $ which is less than or equal  to $\frac{|\sigma(W(3,n))|}{2}$.
	\parindent 1.3cm
\section{Zeroes of the Alexander polynomial of weaving knots}
In 2022, J.  Hoste conjectured  that   if $z$ is a complex zero of the Alexander polynomial of an alternating knot, then  $z$ satisfies the condition $\mathfrak{Re}(z) > -1$. Lyubich and Murasugi proved that this conjecture    holds for a large class of two-bridge  knots, see \cite{LMu}. Stoimenow \cite{St2016} proved it is  true for knots of  genus less than or equal 4.  Ishikawa \cite{Ish} proved that Hoste's conjecture  holds for any 2-bridge knot. A thorough discussions of the distribution  of the zeros of Alexander polynomial of alternating knots  can be found in  \cite{HM}. More recently, Hirasawa, Ishikawa and Suzuki \cite{HIS} found counter-examples to  Hoste's conjecture. It is worth mentioning here that  in all these counterexamples, the zeroes   have real parts which are  very close to -1. Thus, the existence of a lower bound
to the real part of these zeros  remains still an open question.  In this section, we shall compute the zeroes of $\Delta_{W(3,n)}(t)$ and prove that for every such zero  the real part is  greater than -1.\\
\begin{prop}
	The zeroes of $\Delta_{W(3,n)} (t)$ are of the form:
	$$z = -\frac{1}{2} \left( 2cos(\frac{2k}{n} \pi) -1 \pm \sqrt{ (2cos(\frac{2k}{n} \pi) -1) ^2 -4 } \right),$$
	with $1 \leq k \leq \lfloor n/2 \rfloor.$
\end{prop}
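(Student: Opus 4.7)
The plan is to exploit the explicit formula for $\Delta_{W(3,n)}(s)$ established in Section~4, namely
\[
\Delta_{W(3,n)}(s) \equiv \frac{2-2T_n\!\left(\frac{1+s+s^2}{2s}\right)}{1-s+s^2},
\]
and to translate the zeros of the Chebyshev equation $T_n(x)=1$ back first to values of $s$ and then to $t=-s$.

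First I would locate the zeros of $T_n(x)-1$. Since $T_n(\cos\theta)=\cos(n\theta)$, the equation $T_n(x)=1$ is solved precisely by $x_k=\cos(2k\pi/n)$ for $k=0,1,\dots,\lfloor n/2\rfloor$; a direct count (interior $x_k$ are double roots, $x=\pm 1$ are simple) exhausts the $n$ roots of the degree-$n$ polynomial $T_n-1$. For each such $x_k$ I would then solve $\frac{1+s+s^2}{2s}=x_k$, which is the quadratic
\[
s^2-(2x_k-1)s+1=0,
\]
yielding
\[
s=\frac{(2\cos(2k\pi/n)-1)\pm\sqrt{(2\cos(2k\pi/n)-1)^2-4}}{2}.
\]

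The crucial observation is that the case $k=0$ gives $x_0=1$ and the quadratic $s^2-s+1=0$, whose two roots are exactly the roots of the denominator $1-s+s^2$; hence they cancel in the quotient and do not appear among the zeros of $\Delta_{W(3,n)}(s)$. Therefore the distinct zeros of $\Delta_{W(3,n)}(s)$ are parametrized by $k=1,\dots,\lfloor n/2\rfloor$, and the substitution $t=-s$ converts each of them into the claimed form
\[
z=-\tfrac{1}{2}\bigl(2\cos(2k\pi/n)-1\pm\sqrt{(2\cos(2k\pi/n)-1)^2-4}\bigr).
\]
The only delicate point is confirming that no further factors cancel between numerator and denominator beyond $1-s+s^2$. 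This is just a bookkeeping step: the interior $x_k$ contribute squared quadratic factors in $s$, while the boundary value $x_{n/2}=-1$ (when $n$ is even) contributes a simple one, and the total multiplicity matches the known degree $2n-2$ of $\Delta_{W(3,n)}$, so no root is missed and no spurious root is introduced.
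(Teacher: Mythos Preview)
Your proof is correct and follows essentially the same route as the paper: identify $\Delta_{W(3,n)}(s)$ with $\dfrac{2-2T_n\bigl(\frac{1+s+s^2}{2s}\bigr)}{1-s+s^2}$, solve $T_n(x)=1$ via $x_k=\cos(2k\pi/n)$, convert to the quadratic in $s$, and discard the $k=0$ case as the denominator's roots before passing to $t=-s$. If anything, your version is more careful than the paper's, since you explicitly justify the exclusion of $k=0$ and verify by a degree/multiplicity count that no further cancellation occurs.
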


\begin{proof}
	The zeroes of $\Delta_{W(3,n)} (s)$ correspond to the zeroes of $2 - 2T_n(\frac{1+s+s^2}{2s})$ with the zeroes of $1-s+s^2$ ignored.		
	If $2 - 2T_n(\frac{1+s+s^2}{2s}) =0$ then $T_n(\frac{1+s+s^2}{2s}) = 1$. These values correspond to the peaks of $T_n (x)$ for $-1 \leq x \leq 1$ which are indeed given by
	$x_k = cos(\frac{2k}{n} \pi)$ for $0 \leq k \leq \lfloor n/2 \rfloor$.	
Hence, the values of the zeroes of the Alexander polynomial  are the solutions of   $\frac{1+s+s^2}{2s} = cos(\frac{2k}{n} \pi)$. Solving for $s$ and taking $t=-s$ gives us the desired  form.
\end{proof}

\begin{coro}
	If $z$ is a zero of $\Delta_{W(3,n)} (t)$, then $\mathfrak{Re}(z) > -1$.
	Moreover, if $z$ is a non-real root then $|z| =1$.
\end{coro}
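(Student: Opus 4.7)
The plan is to work directly from the explicit description of the zeros given by the previous proposition. Write $c_k = \cos(2k\pi/n)$ and $a_k = 2c_k - 1$, so that every zero has the form
\[
z_{k,\pm} \;=\; -\tfrac{1}{2}\bigl(a_k \pm \sqrt{a_k^2 - 4}\bigr),\qquad 1 \leq k \leq \lfloor n/2\rfloor.
\]
Since $-1 \leq c_k < 1$ on this range, $a_k$ lies in $[-3,1)$, and the sign of the discriminant $a_k^2 - 4$ cleanly splits the analysis into the real and non-real cases. This is the whole structure of the argument.

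First I would dispose of the non-real zeros. When $a_k^2 - 4 < 0$, equivalently $c_k > -\tfrac12$, the roots form a complex conjugate pair $z_{k,\pm} = -a_k/2 \pm i\sqrt{4-a_k^2}/2$. A one-line computation gives
\[
|z_{k,\pm}|^2 \;=\; \frac{a_k^2}{4} + \frac{4-a_k^2}{4} \;=\; 1,
\]
settling the modulus claim. For the real part, $\mathfrak{Re}(z_{k,\pm}) = -a_k/2 = \tfrac12 - c_k \geq -\tfrac12$, which is strictly greater than $-1$.

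Next I would handle the real zeros, i.e.\ the case $a_k^2 - 4 \geq 0$. Because $a_k < 1 < 2$, this forces $a_k \leq -2$, so $-a_k/2 \geq 1$ and $0 \leq \sqrt{a_k^2 - 4} < -a_k$ (the latter because $a_k^2 - 4 < a_k^2$). It follows that both values $z_{k,\pm} = -a_k/2 \pm \sqrt{a_k^2-4}/2$ are strictly positive, hence \emph{a fortiori} greater than $-1$.

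There is essentially no serious obstacle: the only subtlety is noticing that $a_k \leq 1$ automatically, so the discriminant can only be non-negative on the negative side, which forces the real zeros to be positive rather than negative. Once this observation is made, combining the two cases yields both conclusions of the corollary simultaneously, and in fact one gets the sharper bound $\mathfrak{Re}(z) \geq -\tfrac12$ for the non-real zeros.
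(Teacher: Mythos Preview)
Your proof is correct and follows essentially the same approach as the paper: split according to the sign of the discriminant $a_k^2-4$ and treat the real and non-real cases by direct computation. Your handling of the real case is in fact slightly sharper---you observe that $a_k<1$ forces $a_k\leq -2$ and hence both real roots are strictly positive, whereas the paper uses the cruder bounds $a_k\in[-3,-2]$, $\sqrt{a_k^2-4}\in[0,\sqrt{5}]$ separately to obtain only $z\geq(2-\sqrt{5})/2$---but the structure of the argument is the same.
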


\begin{proof}
	If $(2cos(\frac{2k}{n} \pi) -1) ^2 -4 \geq 0$, then we have $cos(\frac{2k}{n} \pi) \leq \frac{-1}{2}$. Since $cos(\frac{2k}{n} \pi)\geq -1$ we get that $(2cos(\frac{2k}{n} \pi) -1) ^2 -4 \leq 5$. If $(2cos(\frac{2k}{n} \pi) -1) ^2 -4 \geq 0$, then  $z$ is real and we have
	
	\begin{align*}
		&-3 -\sqrt{5} \leq 2cos(\frac{2k}{n} \pi) -1 \pm \sqrt{ (2cos(\frac{2k}{n} \pi) -1) ^2 -4 } \leq -2 + \sqrt{5}\\
		\implies &\frac{-3 -\sqrt{5}}{2} \leq -z \leq \frac{-2 + \sqrt{5}}{2}\\
		\implies &z > -1.
	\end{align*}
Otherwise, if $(2cos(\frac{2k}{n} \pi) -1) ^2 -4 < 0$ then $z$ is a non-real root  and we have \\ $$\mathfrak{Re}(z) = \frac{1}{2}\left( 1- 2cos(\frac{2k}{n} \pi) \right)$$ $$\mathfrak{Im}(z) = \pm\frac{1}{2} \sqrt{ 4 - (1- 2cos(\frac{2k}{n} \pi)) ^2 }.$$
One can easily see that $\mathfrak{Re}(z) > -1$ and  $|z| = 1$.
\end{proof}


\section*{Acknowledgments}
 This research was funded  by  United Arab Emirates University, UPAR grant
 $\# G00004167.$

\end{document}